\providecommand{\U}[1]{\protect\rule{.1in}{.1in}}
\theoremstyle{plain}
\newtheorem{definition}{Definition}
\newtheorem{lemma}{Lemma}
\newtheorem{remark}{Remark}
\newtheorem{theorem}{Theorem}
\numberwithin{equation}{section}
\begin{document}
\title{A Dunkl-Gamma Type Operator in Terms of Generalization of Two-Variable Hermite Polynomials}
\author{Bayram \c{C}ekim}
\curraddr{Gazi University, Faculty of Science, Department of Mathematics, 06500,
Teknikokullar, Ankara, Turkey}
\email{bayramcekim@gazi.edu.tr}
\author{Rabia Akta\c{s}}
\address{Ankara University, Faculty of Science, Department of Mathematics, 06100,
Tando\u{g}an, Ankara, Turkey}
\email{raktas@science.ankara.edu.tr}
\author{Fatma Taşdelen}
\address{Ankara University, Faculty of Science, Department of Mathematics, 06100,
Tando\u{g}an, Ankara, Turkey}
\email{tasdelen@science.ankara.edu.tr}
\subjclass[2000]{Primary 41A25, 41A36; Secondary 33C45}
\keywords{Dunkl exponential, Hermite polynomial, Gamma function, modulus of continuity,
Peetre's $K$-functional.}

\begin{abstract}
The goal of this paper is to present a Dunkl-Gamma type operator with the help
of generalization of the two-variable Hermite polynomials and to derive its
approximating properties via the classical modulus of continuity, second
modulus of continuity and Peetre's $K$-functional.

\end{abstract}
\maketitle

\section{Introduction}

By now, several research workers have investigated linear positive operators
and their approximation properties, see for instance \cite{Ata-Buyuk},
\cite{Atakut}, \cite{Ciupa}, \cite{Gupta}, \cite{Szasz}, \cite{Stancu} and
references so on.\textbf{\ }Furthermore, many authors have studied linear
positive operators containing generating functions and given some
approximation properties of these operators. To see such operators, we give
the references such\ as Alt\i n et. al \cite{ADT}, Do\u{g}ru et. al
\cite{DOT}, Krech \cite{K}, Olgun et. al \cite{OIT}, Sucu et. al \cite{Sucu et
al.}, Ta\c{s}delen et. al \cite{TAA}, Varma et. al \cite{Varma et al., VT}.

Latterly, with the help of Dunkl exponential function, several authors have
defined some linear positive operators. First of them is a Dunkl analogue of
Sz\'{a}sz operators given\ in \cite{Sucu} as follows:%
\begin{equation}
S_{n}^{\ast}\left(  g;x\right)  =\frac{1}{e_{\nu}\left(  nx\right)  }%
\sum_{k=0}^{\infty}\frac{\left(  nx\right)  ^{k}}{\gamma_{\nu}\left(
k\right)  }g\left(  \frac{k+2\nu\theta_{k}}{n}\right)  \ ;\ n\in%
\mathbb{N}
,~\nu,\ x\in\lbrack0,\infty)\ \label{1}%
\end{equation}
for $g\in C[0,\infty).$ Here the Dunkl exponential function is defined by%
\begin{equation}
e_{\nu}\left(  x\right)  =\sum_{k=0}^{\infty}\frac{x^{k}}{\gamma_{\nu}\left(
k\right)  } \label{7}%
\end{equation}
for $\nu>-\frac{1}{2}$ and the coefficients $\gamma_{\nu}$ are given by%
\begin{equation}
\gamma_{\nu}\left(  2k\right)  =\frac{2^{2k}k!\Gamma\left(  k+\nu+1/2\right)
}{\Gamma\left(  \nu+1/2\right)  }\text{ and }\gamma_{\nu}\left(  2k+1\right)
=\frac{2^{2k+1}k!\Gamma\left(  k+\nu+3/2\right)  }{\Gamma\left(
\nu+1/2\right)  } \label{3}%
\end{equation}
Also, for the coefficients $\gamma_{\nu},$ the following recursion relation
holds%
\begin{equation}
\frac{\gamma_{\nu}\left(  k+1\right)  }{\gamma_{\nu}\left(  k\right)
}=\left(  2\nu\theta_{k+1}+k+1\right)  ,\text{ }k\in%
\mathbb{N}
_{0}, \label{4}%
\end{equation}
where $\theta_{k}$ is defined by%
\begin{equation}
\theta_{k}=\left\{
\begin{array}
[c]{cc}%
0, & if\text{ }k=2p\\
1, & if\text{ }k=2p+1
\end{array}
\right.  \label{5}%
\end{equation}
for $p\in%
\mathbb{N}
_{0}$ in \cite{Rosenblum}. Then, \.{I}\c{c}\"{o}z and \c{C}ekim have given a
Stancu-type generalization of Sz\'{a}sz-Kantorovich operators and
$q-$Sz\'{a}sz operators with the help of the Dunkl exponential function in
\cite{IC, icoz}.

Next, Wafi and Rao \cite{wafi} has introduced Sz\'{a}sz--Gamma operators based
on Dunkl analogue as%
\begin{equation}
D_{n}^{f}(x)=\frac{1}{e_{\mu}(nx)}\sum\limits_{k=0}^{\infty}\frac{(nx)^{k}%
}{\gamma_{\mu}(k)}\frac{n^{k+2\mu\theta_{k}+\lambda+1}}{\Gamma(k+2\mu
\theta_{k}+\lambda+1)}%
{\textstyle\int_{0}^{\infty}}
t^{k+2\mu\theta_{k}+\lambda}e^{-nt}f(t)dt, \label{10001}%
\end{equation}
where $\lambda\geq0$ and $\Gamma$ is Gamma function defined by%
\begin{equation}
\Gamma(x)=%
{\textstyle\int_{0}^{\infty}}
t^{x-1}e^{-t}dt\text{ for }x>0. \label{10003}%
\end{equation}

Finally, Akta\c{s} et. al \cite{BRF} has introduced the operator $T_{n}(f;x)$
for $n\in%
\mathbb{N}
$%
\[
T_{n}(f;x):=\frac{1}{e^{\alpha x^{2}}e_{\mu}(nx)}\sum\limits_{k=0}^{\infty
}\frac{h_{k}^{\mu}(n,\alpha)}{\gamma_{\mu}(k)}x^{k}f\left(  \frac{k+2\mu
\theta_{k}}{n}\right)
\]
where $\alpha\geq0,\mu\geq0$ and $x\in\left[  0,\infty\right)  ,$ via the
Dunkl generalization of two-variable Hermite polynomials, $h_{n}^{\mu}%
(\xi,\alpha)$ in \cite{BC} defined as follows%
\begin{equation}
\sum\limits_{n=0}^{\infty}\frac{h_{n}^{\mu}(\xi,\alpha)}{\gamma_{\mu}(n)}%
t^{n}=e^{\alpha t^{2}}e_{\mu}(\xi t). \label{a1}%
\end{equation}

Here%
\[
h_{n}^{\mu}(\xi,\alpha)=\dfrac{\gamma_{\mu}(n)H_{n}^{\mu}(\xi,\alpha)}{n!}%
\]
and $H_{n}^{\mu}(\xi,\alpha)$ has the following explicit representation%
\[
H_{n}^{\mu}(\xi,\alpha)=n!\sum\limits_{k=0}^{\left[  \frac{n}{2}\right]
}\frac{\alpha^{k}\xi^{n-2k}}{k!\gamma_{\mu}(n-2k)}.
\]
We note that $H_{n}^{\mu}(\xi,\alpha)$ reduces to the two-variable Hermite
polynomials defined by%
\begin{equation}
H_{n}(\xi,\alpha)=n!\sum\limits_{k=0}^{\left[  \frac{n}{2}\right]  }%
\frac{\alpha^{k}\xi^{n-2k}}{k!(n-2k)!} \label{xx}%
\end{equation}
as $\mu=0,\ $see detail \cite{Appell}. In the case of $\mu=0,$ the operator
$T_{n}(f;x)$ gives the operator $G_{n}^{\alpha}\left(  f;x\right)  $ defined
by Krech \cite{K} as follows%
\[
G_{n}^{\alpha}\left(  f;x\right)  =e^{-\left(  nx+\alpha x^{2}\right)  }%
\sum\limits_{k=0}^{\infty}\frac{x^{k}}{k!}H_{k}(n,\alpha)f\left(  \frac{k}%
{n}\right)  ,~~~x\in\mathbb{R}_{0}^{+}:=\left[  0,\infty\right)  ,
\]
where $H_{k}$ is the two variable Hermite polynomial in (\ref{xx}).
Furthermore, recently, some sequences of Dunkl operators and Dunkl-Gamma type
operators in terms of Appell polynomials have been defined and approximation
properties of these operators have been investigated \cite{Sucu2, TSA}.

The paper is organized as follows. In the next section, we introduce a
Dunkl-Gamma type operator consisting of the generalization of two-variable
Hermite polynomials. In the third section, the rates of convergence of the
operator are obtained by means of the classical modulus of continuity, second
modulus of continuity, Peetre's $K$-functional and the Lipschitz class
$Lip_{M}\left(  \gamma\right)  .$

\section{The Dunkl-Gamma Type Operator}

Firstly, before we introduce our operator, let us give some features and
results related to $h_{n}^{\mu}(\xi,\alpha)$ generated by the Dunkl
generalization of two-variable Hermite polynomials in (\ref{a1}).

We first recall the following definition and lemma in \cite{Rosenblum}.

\begin{definition}
\cite{Rosenblum} Assume that $%
\mu
\in\mathbb{C},~x\in\mathbb{C}$. On all entire functions $\varphi$ on
$\mathbb{C}$, Rosenblum defines the linear operator $\mathbb{D}_{\mu}$ as
follows:%
\begin{equation}
\mathbb{D}_{\mu,x}(\varphi(x))=\left(  \mathbb{D}_{\mu}\varphi\right)
(x)=\varphi^{^{\prime}}(x)+\frac{\mu}{x}(\varphi(x)-\varphi(-x)),\ x\in
\mathbb{C}. \label{6}%
\end{equation}

\end{definition}

\begin{lemma}
\label{Lemma1} \cite{Rosenblum} Assume that $\varphi,\psi$ are entire
functions. With the help of the linear operator $\mathbb{D}_{\mu}$, the
following relations are satisfied:%
\[%
\begin{array}
[c]{cl}%
i) & \mathbb{D}_{\mu}^{j}:x^{n}\rightarrow\frac{\gamma_{\mu}(n)}{\gamma_{\mu
}(n-j)}x^{n-j},\ j=0,1,2,...,n\ (n\in\mathbb{N)};~\mathbb{D}_{\mu}%
^{j}:1\rightarrow0,\\
& \\
ii) & \mathbb{D}_{\mu}(\varphi\psi)=\mathbb{D}_{\mu}(\varphi)\psi
+\varphi\mathbb{D}_{\mu}(\psi),\ \text{if }\varphi\text{ is an even
function,}\\
& \\
iii) & \mathbb{D}_{\mu}:e_{\mu}(\lambda x)\rightarrow\lambda e_{\mu}(\lambda
x).
\end{array}
\]
By using this definition and Lemma \ref{Lemma1}, the results in the next lemma
hold true (see detail \cite{BRF}).
\end{lemma}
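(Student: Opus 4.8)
The plan is to verify the three identities directly from the definition (\ref{6}) of $\mathbb{D}_{\mu}$, using nothing beyond the recursion (\ref{4}) for the coefficients $\gamma_{\mu}$ and the parity bookkeeping encoded by $\theta_{k}$ in (\ref{5}); no deep machinery is needed, since Lemma \ref{Lemma1} is essentially a collection of elementary computations.

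First, for (i) I would compute $\mathbb{D}_{\mu}$ on a single monomial $x^{n}$. Splitting into the cases $n$ even and $n$ odd, the reflection term $\frac{\mu}{x}\bigl(x^{n}-(-x)^{n}\bigr)$ vanishes when $n$ is even and equals $2\mu x^{n-1}$ when $n$ is odd; since $(x^{n})'=nx^{n-1}$, this gives $\mathbb{D}_{\mu}x^{n}=(n+2\mu\theta_{n})\,x^{n-1}$ in both cases. By (\ref{4}) one has $n+2\mu\theta_{n}=\gamma_{\mu}(n)/\gamma_{\mu}(n-1)$, hence $\mathbb{D}_{\mu}x^{n}=\dfrac{\gamma_{\mu}(n)}{\gamma_{\mu}(n-1)}\,x^{n-1}$. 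Applying $\mathbb{D}_{\mu}$ a total of $j$ times and telescoping the product $\dfrac{\gamma_{\mu}(n)}{\gamma_{\mu}(n-1)}\dfrac{\gamma_{\mu}(n-1)}{\gamma_{\mu}(n-2)}\cdots\dfrac{\gamma_{\mu}(n-j+1)}{\gamma_{\mu}(n-j)}$ yields $\mathbb{D}_{\mu}^{j}x^{n}=\dfrac{\gamma_{\mu}(n)}{\gamma_{\mu}(n-j)}\,x^{n-j}$ for $0\le j\le n$. Since $\mathbb{D}_{\mu}1=0$ directly from (\ref{6}), also $\mathbb{D}_{\mu}^{j}1=0$.

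For (ii), note that if $\varphi$ is even then $\varphi(-x)=\varphi(x)$, so the reflection part of $\mathbb{D}_{\mu}\varphi$ disappears and $\mathbb{D}_{\mu}\varphi=\varphi'$. Writing $\mathbb{D}_{\mu}(\varphi\psi)=(\varphi\psi)'+\frac{\mu}{x}\bigl(\varphi(x)\psi(x)-\varphi(-x)\psi(-x)\bigr)$ and using $\varphi(-x)=\varphi(x)$ to factor $\varphi(x)$ out of the reflection term, the right-hand side regroups as $\varphi'\psi+\varphi\bigl(\psi'+\frac{\mu}{x}(\psi(x)-\psi(-x))\bigr)=\mathbb{D}_{\mu}(\varphi)\,\psi+\varphi\,\mathbb{D}_{\mu}(\psi)$, as claimed.

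Finally, for (iii) I would apply $\mathbb{D}_{\mu}$ term by term to the entire series $e_{\mu}(\lambda x)=\sum_{k\ge0}\frac{(\lambda x)^{k}}{\gamma_{\mu}(k)}$ of (\ref{7}); this is legitimate because $e_{\mu}$ is entire, so the series together with its termwise derivative and reflection converge locally uniformly. By part (i) with $j=1$, $\mathbb{D}_{\mu}\frac{(\lambda x)^{k}}{\gamma_{\mu}(k)}=\frac{\lambda^{k}x^{k-1}}{\gamma_{\mu}(k-1)}$ for $k\ge1$ and $0$ for $k=0$; reindexing $k\mapsto k+1$ gives $\sum_{k\ge0}\frac{\lambda^{k+1}x^{k}}{\gamma_{\mu}(k)}=\lambda\,e_{\mu}(\lambda x)$. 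The only mild obstacle in the whole argument is justifying this interchange of $\mathbb{D}_{\mu}$ with the infinite sum, which is precisely why the argument function is assumed entire; the remaining content is an elementary parity computation plus the telescoping identity for $\gamma_{\mu}$.
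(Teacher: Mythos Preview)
Your argument is correct. Note, however, that the paper does not actually prove Lemma~\ref{Lemma1}: it is quoted verbatim from \cite{Rosenblum} and used as a black box, so there is no ``paper's own proof'' to compare against. What you have written is the standard elementary verification---parity of $x^{n}$ to evaluate the reflection term, the recursion (\ref{4}) to identify $n+2\mu\theta_{n}$ as $\gamma_{\mu}(n)/\gamma_{\mu}(n-1)$, telescoping for the iterates, the evenness of $\varphi$ to reduce the Leibniz rule to the ordinary one plus a factored reflection term, and termwise application to the entire series (\ref{7}) for the eigenfunction property. This is exactly the computation one would expect from \cite{Rosenblum}, and it fills in what the present paper deliberately omits.
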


\begin{lemma}
\label{Lemma2} \cite{BRF} $h_{n}^{\mu}(\xi,\alpha)$ has the following results
\[%
\begin{array}
[c]{cl}%
(i) & \sum\limits_{n=0}^{\infty}\frac{h_{n+1}^{\mu}(\xi,\alpha)}{\gamma_{\mu
}(n)}t^{n}=(\xi+2\alpha t)e^{\alpha t^{2}}e_{\mu}(\xi t),\\
(ii) & \sum\limits_{n=0}^{\infty}\frac{h_{n+2}^{\mu}(\xi,\alpha)}{\gamma_{\mu
}(n)}t^{n}=(\xi^{2}+4\xi\alpha t+4\alpha^{2}t^{2}+2\alpha)e^{\alpha t^{2}%
}e_{\mu}(\xi t)+4\alpha\mu e^{\alpha t^{2}}e_{\mu}(-\xi t).
\end{array}
\]

\end{lemma}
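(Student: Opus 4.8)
The plan is to derive both identities by applying the Dunkl operator $\mathbb{D}_{\mu}$, acting in the generating variable $t$, to the generating relation~(\ref{a1}): one application produces part~$(i)$ and a second application (to the identity just obtained) produces part~$(ii)$.

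First I would observe that the power series $\sum_{n\geq 0}\frac{h_{n}^{\mu}(\xi,\alpha)}{\gamma_{\mu}(n)}t^{n}$ represents the entire function $e^{\alpha t^{2}}e_{\mu}(\xi t)$ and hence converges absolutely and locally uniformly in $t$, so $\mathbb{D}_{\mu,t}$ may legitimately be applied term by term. By Lemma~\ref{Lemma1}$(i)$ we have $\mathbb{D}_{\mu,t}(t^{n})=\frac{\gamma_{\mu}(n)}{\gamma_{\mu}(n-1)}t^{n-1}$ for $n\geq 1$ and $\mathbb{D}_{\mu,t}(1)=0$, whence
\[
\mathbb{D}_{\mu,t}\left(\sum_{n=0}^{\infty}\frac{h_{n}^{\mu}(\xi,\alpha)}{\gamma_{\mu}(n)}t^{n}\right)=\sum_{n=1}^{\infty}\frac{h_{n}^{\mu}(\xi,\alpha)}{\gamma_{\mu}(n-1)}t^{n-1}=\sum_{n=0}^{\infty}\frac{h_{n+1}^{\mu}(\xi,\alpha)}{\gamma_{\mu}(n)}t^{n},
\]
which is exactly the left-hand side of~$(i)$. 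On the right-hand side of~(\ref{a1}), the factor $e^{\alpha t^{2}}$ is even in $t$, so Lemma~\ref{Lemma1}$(ii)$ gives $\mathbb{D}_{\mu,t}\bigl(e^{\alpha t^{2}}e_{\mu}(\xi t)\bigr)=\mathbb{D}_{\mu,t}(e^{\alpha t^{2}})\,e_{\mu}(\xi t)+e^{\alpha t^{2}}\,\mathbb{D}_{\mu,t}(e_{\mu}(\xi t))$; here $\mathbb{D}_{\mu,t}(e^{\alpha t^{2}})=2\alpha t\,e^{\alpha t^{2}}$ because $\mathbb{D}_{\mu}$ reduces to the ordinary derivative on even functions, and $\mathbb{D}_{\mu,t}(e_{\mu}(\xi t))=\xi e_{\mu}(\xi t)$ by Lemma~\ref{Lemma1}$(iii)$. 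Equating the two expressions yields~$(i)$.

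For part~$(ii)$ I would apply $\mathbb{D}_{\mu,t}$ once more, now to~$(i)$. The left-hand side transforms, by the same index-shift computation, into $\sum_{n=0}^{\infty}\frac{h_{n+2}^{\mu}(\xi,\alpha)}{\gamma_{\mu}(n)}t^{n}$. For the right-hand side I would split
\[
(\xi+2\alpha t)e^{\alpha t^{2}}e_{\mu}(\xi t)=\xi\,e^{\alpha t^{2}}e_{\mu}(\xi t)+2\alpha\,e^{\alpha t^{2}}\bigl(t\,e_{\mu}(\xi t)\bigr).
\]
The first summand is handled exactly as in~$(i)$, up to the extra constant factor $\xi$, giving $\xi(\xi+2\alpha t)e^{\alpha t^{2}}e_{\mu}(\xi t)$. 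For the second summand, $e^{\alpha t^{2}}$ is again even, so Lemma~\ref{Lemma1}$(ii)$ applies to that factor; but $t\,e_{\mu}(\xi t)$ is \emph{not} even, so here one must return to the definition~(\ref{6}) and compute $\mathbb{D}_{\mu,t}\bigl(t\,e_{\mu}(\xi t)\bigr)$ directly. Using $\frac{d}{dt}e_{\mu}(\xi t)=\xi e_{\mu}(\xi t)-\frac{\mu}{t}\bigl(e_{\mu}(\xi t)-e_{\mu}(-\xi t)\bigr)$, which is just~(\ref{6}) rearranged, one obtains $\mathbb{D}_{\mu,t}\bigl(t\,e_{\mu}(\xi t)\bigr)=(1+\xi t)e_{\mu}(\xi t)+2\mu\,e_{\mu}(-\xi t)$. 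Substituting this back and collecting the terms proportional to $e^{\alpha t^{2}}e_{\mu}(\xi t)$ and to $e^{\alpha t^{2}}e_{\mu}(-\xi t)$ reproduces the right-hand side of~$(ii)$.

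The only genuinely delicate point is the appearance of the reflected exponential $e_{\mu}(-\xi t)$ in~$(ii)$: it comes entirely from the term $\frac{\mu}{t}\varphi(-t)$ in the Dunkl operator acting on the odd factor $t\,e_{\mu}(\xi t)$, which is precisely why the Leibniz-type rule of Lemma~\ref{Lemma1}$(ii)$ cannot be applied to the full product and one must fall back on the definition~(\ref{6}). Everything else is bookkeeping: justifying the term-by-term action of $\mathbb{D}_{\mu,t}$ by local uniform convergence, carrying out the index shift $n\mapsto n+1$ with the $n=0$ term annihilated, and elementary algebra.
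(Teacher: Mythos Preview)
Your proof is correct and follows exactly the route the paper indicates: the paper does not reproduce a proof but states that the identities follow ``by using this definition and Lemma~\ref{Lemma1}'' and refers to \cite{BRF} for the details, which is precisely your strategy of applying $\mathbb{D}_{\mu,t}$ to the generating relation~(\ref{a1}) (once for~$(i)$, twice for~$(ii)$) and invoking Lemma~\ref{Lemma1}$(i)$--$(iii)$. Your careful handling of the odd factor $t\,e_{\mu}(\xi t)$ via the definition~(\ref{6}) is the correct way to produce the $e_{\mu}(-\xi t)$ term, and the algebra checks out.
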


Now we can define our operator as follows:

\begin{definition}
Via $h_{n}^{\mu}(\xi,\alpha)$ given in (\ref{a1}), we consider the operator
$\mathcal{S}_{n}(f;x),$ $n\in%
\mathbb{N}
$ given by
\begin{equation}
\mathcal{S}_{n}(f;x):=\frac{1}{e^{\alpha x^{2}}e_{\mu}(nx)}\sum\limits_{k=0}%
^{\infty}\frac{h_{k}^{\mu}(n,\alpha)}{\gamma_{\mu}(k)}x^{k}\frac
{n^{k+2\mu\theta_{k}+\lambda+1}}{\Gamma(k+2\mu\theta_{k}+\lambda+1)}%
{\textstyle\int_{0}^{\infty}}
t^{k+2\mu\theta_{k}+\lambda}e^{-nt}f(t)dt \label{1001}%
\end{equation}
where$\ \alpha\geq0,\ \mu>\frac{-1}{2},\ \lambda\geq0$ and $x\in\left[
0,\infty\right)  .$ We note that the operator in (\ref{1001}) is positive and
linear. For $\alpha=0,$ it reduces to $D_{n}^{f}(x)$ given by (\ref{10001}).
\end{definition}

\begin{lemma}
\label{Lemma 3} The following equations can be derived from the definition of
the operator $\mathcal{S}_{n}(f;x)$:%
\[%
\begin{array}
[c]{cl}%
(i) & \mathcal{S}_{n}(1;x)=1,\\
(ii) & \mathcal{S}_{n}(t;x)=x+\frac{2\alpha x^{2}}{n}+\frac{\lambda+1}{n},\\
(iii) &
\begin{array}
[c]{l}%
\mathcal{S}_{n}(t^{2};x)=\frac{x^{2}}{n^{2}}\left\{  n^{2}+4n\alpha
x+4\alpha^{2}x^{2}+2\alpha+4\alpha\mu\frac{e_{\mu}(-nx)}{e_{\mu}(nx)}\right\}
\\
+\frac{2\mu x}{n^{2}}(n-2\alpha x)\frac{e_{\mu}(-nx)}{e_{\mu}(nx)}%
+\frac{2(\lambda+2)}{n^{2}}(n+2\alpha x)x+\frac{(\lambda+1)(\lambda+2)}{n^{2}%
}.
\end{array}
\end{array}
\]

\end{lemma}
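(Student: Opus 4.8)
The plan is to evaluate the three moments by first carrying out the inner Gamma integral and then resumming the resulting Dunkl-type series by means of the generating identities. For $f(t)=t^{m}$, substituting $u=nt$ in \eqref{10003} gives
\[
\frac{n^{k+2\mu\theta_{k}+\lambda+1}}{\Gamma(k+2\mu\theta_{k}+\lambda+1)}\int_{0}^{\infty}t^{k+2\mu\theta_{k}+\lambda+m}e^{-nt}\,dt=\frac{\Gamma(k+2\mu\theta_{k}+\lambda+m+1)}{n^{m}\,\Gamma(k+2\mu\theta_{k}+\lambda+1)},
\]
which for $m=0,1,2$ equals $1$, $\tfrac{1}{n}(k+2\mu\theta_{k}+\lambda+1)$, and $\tfrac{1}{n^{2}}(k+2\mu\theta_{k}+\lambda+1)(k+2\mu\theta_{k}+\lambda+2)$ respectively (interchanging sum and integral is justified by positivity of the terms). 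Thus $\mathcal{S}_{n}(t^{m};x)$ becomes $\tfrac{1}{e^{\alpha x^{2}}e_{\mu}(nx)}$ times a power series in $x$ whose coefficients are $\tfrac{h_{k}^{\mu}(n,\alpha)}{\gamma_{\mu}(k)}$ multiplied by a polynomial of degree $m$ in $k+2\mu\theta_{k}$.

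For part $(i)$ the coefficient is simply $\tfrac{h_{k}^{\mu}(n,\alpha)}{\gamma_{\mu}(k)}$, so by \eqref{a1} evaluated at $\xi=n$, $t=x$ the series is exactly $e^{\alpha x^{2}}e_{\mu}(nx)$, giving $\mathcal{S}_{n}(1;x)=1$. For part $(ii)$ I would split $k+2\mu\theta_{k}+\lambda+1=(k+2\mu\theta_{k})+(\lambda+1)$; the constant part resums to $e^{\alpha x^{2}}e_{\mu}(nx)$ as before, while for the other part the recursion \eqref{4} gives $\tfrac{k+2\mu\theta_{k}}{\gamma_{\mu}(k)}=\tfrac{1}{\gamma_{\mu}(k-1)}$ for $k\ge1$ (the $k=0$ term vanishing), so a shift of summation index turns the series into $x\sum_{j\ge0}\tfrac{h_{j+1}^{\mu}(n,\alpha)}{\gamma_{\mu}(j)}x^{j}$, which equals $x(n+2\alpha x)e^{\alpha x^{2}}e_{\mu}(nx)$ by Lemma \ref{Lemma2}$(i)$. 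Dividing through by $e^{\alpha x^{2}}e_{\mu}(nx)$ and adding the two contributions yields $(ii)$.

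Part $(iii)$ is where the real work lies. Expanding the product,
\[
(k+2\mu\theta_{k}+\lambda+1)(k+2\mu\theta_{k}+\lambda+2)=(k+2\mu\theta_{k})^{2}+(2\lambda+3)(k+2\mu\theta_{k})+(\lambda+1)(\lambda+2),
\]
so besides the two sums already handled I must evaluate $\Sigma:=\sum_{k\ge0}\tfrac{h_{k}^{\mu}(n,\alpha)}{\gamma_{\mu}(k)}(k+2\mu\theta_{k})^{2}x^{k}$. I would treat $\Sigma$ via the decomposition $(k+2\mu\theta_{k})^{2}=(k+2\mu\theta_{k})(k-1+2\mu\theta_{k-1})+(k+2\mu\theta_{k})\bigl(1-2\mu(-1)^{k}\bigr)$, where the second factor comes from $(k+2\mu\theta_{k})-(k-1+2\mu\theta_{k-1})=1+2\mu(\theta_{k}-\theta_{k-1})=1-2\mu(-1)^{k}$ by the parity of $\theta_{k}$ in \eqref{5}. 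For the first summand, two uses of \eqref{4} give $\tfrac{(k+2\mu\theta_{k})(k-1+2\mu\theta_{k-1})}{\gamma_{\mu}(k)}=\tfrac{1}{\gamma_{\mu}(k-2)}$ for $k\ge2$, and shifting the index twice produces $x^{2}\sum_{j\ge0}\tfrac{h_{j+2}^{\mu}(n,\alpha)}{\gamma_{\mu}(j)}x^{j}$, evaluated by Lemma \ref{Lemma2}$(ii)$. For the second summand, the part $(k+2\mu\theta_{k})$ is resummed as in $(ii)$, while from $\tfrac{(k+2\mu\theta_{k})(-1)^{k}}{\gamma_{\mu}(k)}=\tfrac{(-1)^{k}}{\gamma_{\mu}(k-1)}$ a single shift gives $-x\sum_{j\ge0}\tfrac{h_{j+1}^{\mu}(n,\alpha)}{\gamma_{\mu}(j)}(-x)^{j}=-x(n-2\alpha x)e^{\alpha x^{2}}e_{\mu}(-nx)$ by Lemma \ref{Lemma2}$(i)$ at $\xi=n$, $t=-x$ (using $e^{\alpha(-x)^{2}}=e^{\alpha x^{2}}$). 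Equivalently, since $x\mathbb{D}_{\mu,x}$ acts on $x^{k}$ as multiplication by $k+2\mu\theta_{k}$ (Lemma \ref{Lemma1}$(i)$ together with \eqref{4}), one has $\Sigma=(x\mathbb{D}_{\mu,x})^{2}\bigl[e^{\alpha x^{2}}e_{\mu}(nx)\bigr]$, which may instead be computed from the definition \eqref{6} and Lemma \ref{Lemma1}$(ii)$--$(iii)$. Collecting the three pieces of $\Sigma$, adding the $(2\lambda+3)(k+2\mu\theta_{k})$ and $(\lambda+1)(\lambda+2)$ contributions, dividing by $n^{2}e^{\alpha x^{2}}e_{\mu}(nx)$, and regrouping yields the stated formula; the ratio $e_{\mu}(-nx)/e_{\mu}(nx)$ enters exactly through the $(-1)^{k}$ terms. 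The only genuine obstacle is carefully tracking the parity factor $\theta_{k}$ through the index shifts and isolating the part of the answer proportional to $e_{\mu}(-nx)$ --- the remaining manipulations with the Gamma function and with \eqref{a1} and Lemma \ref{Lemma2} are routine.
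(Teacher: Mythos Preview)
Your proposal is correct and follows essentially the same route as the paper: evaluate the inner Gamma integral, use the recursion \eqref{4} to shift indices, exploit the parity relation for $\theta_k$, and resum via the generating function \eqref{a1} and Lemma~\ref{Lemma2}. The only cosmetic difference is in part~(iii): the paper factors $(k+2\mu\theta_k+\lambda+1)(k+2\mu\theta_k+\lambda+2)$ as $(k+2\mu\theta_k+\lambda+1)(k+2\mu\theta_k)+(\lambda+2)(k+2\mu\theta_k)+(\lambda+1)(\lambda+2)$, shifts once, and then applies $\theta_{k+1}=\theta_k+(-1)^k$, whereas you expand fully into $(k+2\mu\theta_k)^2+(2\lambda+3)(k+2\mu\theta_k)+(\lambda+1)(\lambda+2)$ and handle the square via the product $(k+2\mu\theta_k)(k-1+2\mu\theta_{k-1})$ --- the same ingredients in a slightly different order.
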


\begin{proof}
From the definition of Gamma function in (\ref{10003}), we have%
\[%
{\textstyle\int_{0}^{\infty}}
t^{k+2\mu\theta_{k}+\lambda}e^{-nt}dt=\frac{\Gamma(k+2\mu\theta_{k}%
+\lambda+1)}{n^{k+2\mu\theta_{k}+\lambda+1}}.
\]
By using the above equation and the generating function in (\ref{a1}), we get
the relation $(i)$. Using the definition of Gamma function again, we have%
\[%
{\textstyle\int_{0}^{\infty}}
t^{k+2\mu\theta_{k}+\lambda+1}e^{-nt}dt=\frac{\Gamma(k+2\mu\theta_{k}%
+\lambda+2)}{n^{k+2\mu\theta_{k}+\lambda+2}}.
\]
Thus we get the relations%
\begin{align*}
\mathcal{S}_{n}(t;x)  &  =\frac{1}{e^{\alpha x^{2}}e_{\mu}(nx)}\sum
\limits_{k=0}^{\infty}\frac{h_{k}^{\mu}(n,\alpha)}{\gamma_{\mu}(k)}x^{k}%
\frac{n^{k+2\mu\theta_{k}+\lambda+1}}{\Gamma(k+2\mu\theta_{k}+\lambda+1)}%
\frac{\Gamma(k+2\mu\theta_{k}+\lambda+2)}{n^{k+2\mu\theta_{k}+\lambda+2}}\\
&  =\frac{1}{ne^{\alpha x^{2}}e_{\mu}(nx)}\sum\limits_{k=0}^{\infty}%
(k+2\mu\theta_{k}+\lambda+1)\frac{h_{k}^{\mu}(n,\alpha)}{\gamma_{\mu}(k)}%
x^{k}\\
&  =\frac{1}{ne^{\alpha x^{2}}e_{\mu}(nx)}\sum\limits_{k=0}^{\infty}%
(k+2\mu\theta_{k})\frac{h_{k}^{\mu}(n,\alpha)}{\gamma_{\mu}(k)}x^{k}%
+\frac{(\lambda+1)}{ne^{\alpha x^{2}}e_{\mu}(nx)}\sum\limits_{k=0}^{\infty
}\frac{h_{k}^{\mu}(n,\alpha)}{\gamma_{\mu}(k)}x^{k}.
\end{align*}
The second series in right hand side of the above equation from the generating
function in (\ref{a1}) is $\frac{(\lambda+1)}{n}.$ Also, if we use the
recursion relation in (\ref{4}) for the first term, we get%
\[
\mathcal{S}_{n}(t;x)=\frac{1}{ne^{\alpha x^{2}}e_{\mu}(nx)}\sum\limits_{k=1}%
^{\infty}\frac{h_{k}^{\mu}(n,\alpha)}{\gamma_{\mu}(k-1)}x^{k}+\frac
{(\lambda+1)}{n}.
\]
While we are substituting $k$ by $k+1$ and using Lemma \ref{Lemma2} $(i)$, we
arrive at the relation $(ii)$. From the definition of Gamma function in
(\ref{10003}) again, the following equality holds%
\[%
{\textstyle\int_{0}^{\infty}}
t^{k+2\mu\theta_{k}+\lambda+2}e^{-nt}dt=\frac{\Gamma(k+2\mu\theta_{k}%
+\lambda+3)}{n^{k+2\mu\theta_{k}+\lambda+3}},
\]
from which, it follows%
\begin{align*}
\mathcal{S}_{n}(t^{2};x)  &  =\frac{1}{e^{\alpha x^{2}}e_{\mu}(nx)}%
\sum\limits_{k=0}^{\infty}\frac{h_{k}^{\mu}(n,\alpha)}{\gamma_{\mu}(k)}%
x^{k}\frac{n^{k+2\mu\theta_{k}+\lambda+1}}{\Gamma(k+2\mu\theta_{k}+\lambda
+1)}\frac{\Gamma(k+2\mu\theta_{k}+\lambda+3)}{n^{k+2\mu\theta_{k}+\lambda+3}%
}\\
&  =\frac{1}{n^{2}e^{\alpha x^{2}}e_{\mu}(nx)}\sum\limits_{k=0}^{\infty
}(k+2\mu\theta_{k}+\lambda+1)(k+2\mu\theta_{k})\frac{h_{k}^{\mu}(n,\alpha
)}{\gamma_{\mu}(k)}x^{k}\\
&  +\frac{(\lambda+2)}{n^{2}e^{\alpha x^{2}}e_{\mu}(nx)}\sum\limits_{k=0}%
^{\infty}(k+2\mu\theta_{k})\frac{h_{k}^{\mu}(n,\alpha)}{\gamma_{\mu}(k)}%
x^{k}\\
&  +\frac{(\lambda+2)(\lambda+1)}{n^{2}e^{\alpha x^{2}}e_{\mu}(nx)}%
\sum\limits_{k=0}^{\infty}\frac{h_{k}^{\mu}(n,\alpha)}{\gamma_{\mu}(k)}x^{k}.
\end{align*}
The third term in right hand side of the above equation from the generating
function in (\ref{a1}) is $\frac{(\lambda+2)(\lambda+1)}{n^{2}}.$ Also by
taking into account the recursion relation in (\ref{4}) the for first and
second series, we obtain%
\begin{align*}
\mathcal{S}_{n}(t^{2};x)  &  =\frac{x}{n^{2}e^{\alpha x^{2}}e_{\mu}(nx)}%
\sum\limits_{k=0}^{\infty}(k+2\mu\theta_{k+1}+\lambda+2)\frac{h_{k+1}^{\mu
}(n,\alpha)}{\gamma_{\mu}(k)}x^{k}\\
&  +\frac{(\lambda+2)x}{n^{2}e^{\alpha x^{2}}e_{\mu}(nx)}\sum\limits_{k=0}%
^{\infty}\frac{h_{k+1}^{\mu}(n,\alpha)}{\gamma_{\mu}(k)}x^{k}+\frac
{(\lambda+2)(\lambda+1)}{n^{2}}.
\end{align*}

Using the equation%
\begin{equation}
\theta_{k+1}=\theta_{k}+(-1)^{k} \label{x}%
\end{equation}
in \cite{Rosenblum}, it yields%
\begin{align*}
\mathcal{S}_{n}(t^{2};x)  &  =\frac{x}{n^{2}e^{\alpha x^{2}}e_{\mu}(nx)}%
\sum\limits_{k=0}^{\infty}(k+2\mu\theta_{k})\frac{h_{k+1}^{\mu}(n,\alpha
)}{\gamma_{\mu}(k)}x^{k}\\
&  +\frac{2\mu x}{n^{2}e^{\alpha x^{2}}e_{\mu}(nx)}\sum\limits_{k=0}^{\infty
}\frac{h_{k+1}^{\mu}(n,\alpha)}{\gamma_{\mu}(k)}(-x)^{k}\\
&  +\frac{2(\lambda+2)x}{n^{2}e^{\alpha x^{2}}e_{\mu}(nx)}\sum\limits_{k=0}%
^{\infty}\frac{h_{k+1}^{\mu}(n,\alpha)}{\gamma_{\mu}(k)}x^{k}+\frac
{(\lambda+2)(\lambda+1)}{n^{2}}.
\end{align*}
Finally using the recursion relation in (\ref{4}) in the first series, from
Lemma \ref{Lemma2} $(i)$ for the second and third series and Lemma
\ref{Lemma2} $(ii)$ for the first series, we complete the proof of $(iii)$.
\end{proof}

\begin{remark}
In case of $\alpha=0,$ the results of Lemma \ref{Lemma 3} reduce to the
results in the paper of Wafi and Rao in \cite{wafi}.
\end{remark}

\begin{lemma}
\label{Lemma 4}From the results of Lemma \ref{Lemma 3} and the linearity of
the operator, we can obtain the next results for $\mathcal{S}_{n}$ operator%
\begin{align}
\Lambda_{1}  &  =\mathcal{S}_{n}(t-x;x)=\frac{2\alpha x^{2}+\lambda+1}%
{n},\nonumber\\
\Lambda_{2}  &  =\mathcal{S}_{n}(\left(  t-x\right)  ^{2};x)\nonumber\\
&  =\frac{1}{n}\left[  \frac{x^{2}}{n}\left(  4x^{2}\alpha^{2}+4\lambda
\alpha+10\alpha\right)  +2x\left(  \mu\tfrac{e_{\mu}(-nx)}{e_{\mu}%
(nx)}+1\right)  +\tfrac{(\lambda+1)(\lambda+2)}{n}\right]  . \label{A}%
\end{align}
Taking into account the inequality $\left\vert \tfrac{e_{\mu}(-x)}{e_{\mu}%
(x)}\right\vert \leq1$ for $x\geq0$ and $\mu>\frac{-1}{2}$ and $\tfrac{e_{\mu
}(-x)}{e_{\mu}(x)}\rightarrow0$ as $x\rightarrow\infty$ in \cite{Milo}, we
have the following theorem.
\end{lemma}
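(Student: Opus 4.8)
The plan is to reduce everything to the three moment identities in Lemma~\ref{Lemma 3} using only the linearity of $\mathcal{S}_{n}$, so that no fresh series manipulation is required. For the first central moment I would write $\Lambda_{1}=\mathcal{S}_{n}(t;x)-x\,\mathcal{S}_{n}(1;x)$ and insert parts $(i)$ and $(ii)$ of Lemma~\ref{Lemma 3}; the term $x$ in $\mathcal{S}_{n}(t;x)$ cancels against $-x\cdot 1$, leaving $\frac{2\alpha x^{2}}{n}+\frac{\lambda+1}{n}$, which is exactly $\Lambda_{1}=\frac{2\alpha x^{2}+\lambda+1}{n}$.

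For the second central moment I would expand $(t-x)^{2}=t^{2}-2xt+x^{2}$ and use linearity to obtain
\[
\Lambda_{2}=\mathcal{S}_{n}(t^{2};x)-2x\,\mathcal{S}_{n}(t;x)+x^{2}\,\mathcal{S}_{n}(1;x),
\]
and then substitute parts $(i)$, $(ii)$, $(iii)$ of Lemma~\ref{Lemma 3}. After multiplying out $\mathcal{S}_{n}(t^{2};x)$ the leading terms $x^{2}-2x^{2}+x^{2}$ cancel, and so do the next-order terms $\frac{4\alpha x^{3}}{n}-\frac{4\alpha x^{3}}{n}$, so only contributions of order $1/n$ and $1/n^{2}$ remain. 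I would then collect these by powers of $1/n$: the $1/n$ part combines $\frac{2(\lambda+2)x}{n}-\frac{2(\lambda+1)x}{n}=\frac{2x}{n}$ with $\frac{2\mu x}{n}\,\frac{e_{\mu}(-nx)}{e_{\mu}(nx)}$ to give $\frac{2x}{n}\left(\mu\,\frac{e_{\mu}(-nx)}{e_{\mu}(nx)}+1\right)$, while the $1/n^{2}$ part gathers $4\alpha^{2}x^{4}$, $2\alpha x^{2}$, $4\alpha(\lambda+2)x^{2}$ and $(\lambda+1)(\lambda+2)$ into $\frac{x^{2}}{n^{2}}\left(4\alpha^{2}x^{2}+4\lambda\alpha+10\alpha\right)+\frac{(\lambda+1)(\lambda+2)}{n^{2}}$. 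Pulling out a common factor $\frac1n$ then produces precisely~(\ref{A}).

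The single delicate point — what I would flag as the main obstacle — is the bookkeeping inside $\mathcal{S}_{n}(t^{2};x)$ from Lemma~\ref{Lemma 3}$(iii)$: the term $\frac{4\alpha\mu x^{2}}{n^{2}}\,\frac{e_{\mu}(-nx)}{e_{\mu}(nx)}$ coming from the first brace and the term $-\frac{4\alpha\mu x^{2}}{n^{2}}\,\frac{e_{\mu}(-nx)}{e_{\mu}(nx)}$ coming from $\frac{2\mu x}{n^{2}}(n-2\alpha x)\,\frac{e_{\mu}(-nx)}{e_{\mu}(nx)}$ must be recognized as cancelling, so that the ratio $e_{\mu}(-nx)/e_{\mu}(nx)$ survives in $\Lambda_{2}$ only through the one clean term $\frac{2\mu x}{n}\,\frac{e_{\mu}(-nx)}{e_{\mu}(nx)}$. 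Once this cancellation is noticed, the remainder is routine collection of like powers of $x$ and $n$; no inequality or limiting argument is involved, since the statement is an exact identity.
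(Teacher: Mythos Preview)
Your proposal is correct and follows exactly the route the paper intends: the paper does not even give an explicit proof of this lemma, merely stating that it follows ``from the results of Lemma~\ref{Lemma 3} and the linearity of the operator,'' and your write-up carries out precisely that substitution and collection of terms. Your observation about the cancellation of the two $\frac{4\alpha\mu x^{2}}{n^{2}}\frac{e_{\mu}(-nx)}{e_{\mu}(nx)}$ contributions inside $\mathcal{S}_{n}(t^{2};x)$ is the only nontrivial piece of bookkeeping, and you have it right.
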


\begin{theorem}
\label{Theorem 4} Assume that the function $g$ on the interval $[0,\infty)$ is
uniformly continuous bounded function. For each function $g$ on $[0,\infty),$
we can give%
\[
\mathcal{S}_{n}\left(  g;x\right)  \overset{\text{uniformly}}%
{\rightrightarrows}g\left(  x\right)
\]
on each compact set $A\subset$ $[0,\infty)$\ when $n\rightarrow\infty$.
\end{theorem}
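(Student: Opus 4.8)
The plan is to deduce the statement from the Bohman--Korovkin theorem for positive linear operators on $[0,\infty)$: if $\mathcal{S}_n$ is a sequence of positive linear operators such that $\mathcal{S}_n(e_i;x)\rightrightarrows e_i(x)$ uniformly on a compact set $A\subset[0,\infty)$ for the three test functions $e_0(t)=1$, $e_1(t)=t$, $e_2(t)=t^2$, then $\mathcal{S}_n(g;x)\rightrightarrows g(x)$ uniformly on $A$ for every bounded continuous $g$ on $[0,\infty)$ (boundedness of $g$ is precisely what guarantees that the defining integrals in (\ref{1001}) converge). Since the operator (\ref{1001}) is positive and linear, the whole proof reduces to verifying the three moment limits, and these are supplied directly by Lemma \ref{Lemma 3}.

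First, $\mathcal{S}_n(1;x)=1$ by Lemma \ref{Lemma 3}$(i)$, so the case $e_0$ is trivial. For $e_1$, Lemma \ref{Lemma 3}$(ii)$ gives $\mathcal{S}_n(t;x)-x=\frac{2\alpha x^{2}+\lambda+1}{n}$; fixing a compact set $A\subseteq[0,b]$, the right-hand side is bounded in absolute value by $\frac{2\alpha b^{2}+\lambda+1}{n}$, which tends to $0$ as $n\to\infty$ independently of $x\in A$. For $e_2$, I would work either from the explicit formula in Lemma \ref{Lemma 3}$(iii)$ or, more efficiently, from Lemma \ref{Lemma 4}, using the identity $\mathcal{S}_n(t^{2};x)-x^{2}=\Lambda_{2}+2x\Lambda_{1}$, and observe that every summand there carries a factor $\frac{1}{n}$ or $\frac{1}{n^{2}}$ while the remaining polynomial-in-$x$ factors stay bounded on $A$.

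The only point requiring a word of care is the presence of the ratio $\frac{e_{\mu}(-nx)}{e_{\mu}(nx)}$ in $\mathcal{S}_n(t^{2};x)$. Because $\mu\ge 0$ and $x\ge 0$, the series $e_{\mu}(nx)=\sum_{k}\frac{(nx)^{k}}{\gamma_{\mu}(k)}$ has nonnegative terms and dominates the alternating series $e_{\mu}(-nx)$ term by term in absolute value, so $0<\left|\frac{e_{\mu}(-nx)}{e_{\mu}(nx)}\right|\le 1$ for all $n$ and all $x\in A$; hence that factor contributes nothing beyond a bounded constant. With this bound, $\sup_{x\in A}\left|\mathcal{S}_n(t^{2};x)-x^{2}\right|=O\!\left(\tfrac{1}{n}\right)\to 0$. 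The verification is elementary; the only mild obstacle is the bookkeeping of the several $O(1/n)$ and $O(1/n^{2})$ terms together with the uniform bound on the Dunkl ratio, which the observation above settles. Once the three uniform limits are in place, the Bohman--Korovkin theorem gives $\mathcal{S}_n(g;x)\rightrightarrows g(x)$ on $A$, which is the assertion.
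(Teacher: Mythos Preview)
Your argument is correct and is precisely the intended one: the paper states Theorem~\ref{Theorem 4} without proof, but the citation of Korovkin's theorem \cite{Korovkin} together with the preparatory Lemma~\ref{Lemma 3} makes clear that the proof is meant to proceed exactly as you describe, by checking the three test functions and invoking Korovkin. Your handling of the ratio $e_{\mu}(-nx)/e_{\mu}(nx)$ via the termwise triangle inequality is the right way to dispose of that term.
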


\begin{proof}
In view of Lemma 3%
\[
\underset{n\rightarrow\infty}{\lim}\mathcal{S}_{n}\left(  t^{i};x\right)
=x^{i}~~,~~i=0,1,2
\]
is verified where the convergence holds uniformly in each compact subset of
$[0,\infty)$. Then, using well known Korovkin Theorem in \cite{Korovkin}, we
give the desired result.
\end{proof}

\section{The Convergence Rates of Operator $\mathcal{S}_{n}$}

In this part, we obtain some rates of convergence of the operator
$\mathcal{S}_{n}$.

\begin{theorem}
\label{Theorem 6}If $h\in Lip_{M}\left(  \gamma\right)  $, which satisfies the
inequality%
\[
\left\vert h\left(  s\right)  -h\left(  t\right)  \right\vert \leq M\left\vert
s-t\right\vert ^{\gamma}%
\]
where $s,t\in\lbrack0,\infty),\ 0<\gamma\leq1$ and $M>0,$ we have%
\[
\left\vert \mathcal{S}_{n}\left(  h;x\right)  -h\left(  x\right)  \right\vert
\leq M\left(  \Lambda_{2}\right)  ^{\gamma/2}%
\]
where $\Lambda_{2}$ is given in Lemma \ref{Lemma 4}.
\end{theorem}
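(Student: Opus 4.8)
The plan is to exploit the positivity and linearity of $\mathcal{S}_n$ together with the H\"older inequality for positive linear functionals. First I would use $\mathcal{S}_n(1;x)=1$ from Lemma \ref{Lemma 3}$(i)$ and linearity to write
\[
\mathcal{S}_n(h;x)-h(x)=\mathcal{S}_n\big(h(t)-h(x);x\big),
\]
and then, since $\mathcal{S}_n$ is a positive operator, pass the absolute value inside:
\[
\left\vert \mathcal{S}_n(h;x)-h(x)\right\vert \le \mathcal{S}_n\big(\vert h(t)-h(x)\vert;x\big).
\]
Invoking the Lipschitz hypothesis $\vert h(s)-h(t)\vert\le M\vert s-t\vert^{\alpha}$ inside the operator then gives
\[
\left\vert \mathcal{S}_n(h;x)-h(x)\right\vert \le M\,\mathcal{S}_n\big(\vert t-x\vert^{\alpha};x\big).
\]

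Next I would apply the H\"older inequality with the conjugate exponents $p=2/\alpha$ and $q=2/(2-\alpha)$, which are admissible because $0<\alpha\le 1$. Writing $\vert t-x\vert^{\alpha}=\vert t-x\vert^{\alpha}\cdot 1$ and using that $\mathcal{S}_n$ is an integral average against a nonnegative kernel, one obtains
\[
\mathcal{S}_n\big(\vert t-x\vert^{\alpha};x\big)\le \Big(\mathcal{S}_n\big((t-x)^2;x\big)\Big)^{\alpha/2}\Big(\mathcal{S}_n(1;x)\Big)^{(2-\alpha)/2}.
\]
Since $\mathcal{S}_n(1;x)=1$ and $\mathcal{S}_n\big((t-x)^2;x\big)=\Lambda_2$ by Lemma \ref{Lemma 4}, combining this with the previous display yields
\[
\left\vert \mathcal{S}_n(h;x)-h(x)\right\vert \le M(\Lambda_2)^{\alpha/2},
\]
which is exactly the assertion.

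The argument is essentially routine; the only step that deserves a word of justification is the H\"older estimate for the operator itself. Because $\mathcal{S}_n(f;x)$ is built from the coefficients $h_k^{\mu}(n,\alpha)x^{k}/\gamma_{\mu}(k)$, which are nonnegative for $\alpha\ge 0$, $\mu\ge 0$, $x\ge 0$, multiplied by the Gamma-type densities $t^{k+2\mu\theta_k+\lambda}e^{-nt}$, it acts as a genuine average against a nonnegative measure, so the classical H\"older inequality applies verbatim. I therefore expect no real obstacle beyond this bookkeeping and the bound follows immediately from Lemmas \ref{Lemma 3} and \ref{Lemma 4}.
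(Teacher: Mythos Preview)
Your argument is correct and follows exactly the paper's route: linearity and positivity to reduce to $M\,\mathcal{S}_n(|t-x|^{\alpha};x)$, then H\"older's inequality together with Lemma~\ref{Lemma 4} to obtain $M(\Lambda_2)^{\alpha/2}$. The only difference is that you spell out the conjugate exponents and the positivity of the kernel, which the paper leaves implicit.
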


\begin{proof}
From the linearity of operator and $h\in Lip_{M}\left(  \gamma\right)  ,$ we
get%
\[
\left\vert \mathcal{S}_{n}\left(  h;x\right)  -h\left(  x\right)  \right\vert
\leq\mathcal{S}_{n}\left(  \left\vert h\left(  t\right)  -h\left(  x\right)
\right\vert ;x\right)  \leq M\mathcal{S}_{n}\left(  \left\vert t-x\right\vert
^{\gamma};x\right)  .
\]
Under favour of H\"{o}lder's inequality and Lemma \ref{Lemma 4}, we can give
the following required inequality%
\[
\left\vert \mathcal{S}_{n}\left(  h;x\right)  -h\left(  x\right)  \right\vert
\leq M\left[  \Lambda_{2}\right]  ^{\frac{\gamma}{2}}.
\]

\end{proof}

\begin{theorem}
\label{Theorem 7}The operator $\mathcal{S}_{n}$ in (\ref{1001}) satisfies the
inequality%
\[
\left\vert \mathcal{S}_{n}\left(  g;x\right)  -g\left(  x\right)  \right\vert
\leq\left(  1+\sqrt{\tfrac{x^{2}}{n}\left(  4x^{2}\alpha^{2}+4\lambda
\alpha+10\alpha\right)  +2x\left(  \mu\tfrac{e_{\mu}(-nx)}{e_{\mu}%
(nx)}+1\right)  +\tfrac{(\lambda+1)(\lambda+2)}{n}}\right)  \omega\left(
g;\frac{1}{\sqrt{n}}\right)  ,
\]
where $g\in\widetilde{C}[0,\infty),$ which is the space of uniformly
continuous functions on $[0,\infty),\ $and the modulus of continuity is
defined by
\begin{equation}
\omega\left(  g;\delta\right)  :=\sup\limits_{\substack{s,t\in\lbrack0,\infty)
\\\left\vert s-t\right\vert \leq\delta}}\left\vert g\left(  s\right)
-g\left(  t\right)  \right\vert \label{12}%
\end{equation}
for $g\in\widetilde{C}[0,\infty).$
\end{theorem}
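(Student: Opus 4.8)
The plan is to use the standard modulus-of-continuity estimate for positive linear operators that reproduce constants. Since $\mathcal{S}_n(1;x)=1$ by Lemma \ref{Lemma 3}$(i)$, linearity and positivity give
\[
\left\vert \mathcal{S}_{n}\left(g;x\right)-g\left(x\right)\right\vert
\leq\mathcal{S}_{n}\left(\left\vert g\left(t\right)-g\left(x\right)\right\vert;x\right).
\]
First I would invoke the classical property of the modulus of continuity that for any $\delta>0$,
\[
\left\vert g\left(t\right)-g\left(x\right)\right\vert\leq\omega\left(g;\delta\right)\left(1+\frac{(t-x)^{2}}{\delta^{2}}\right),
\]
which holds because $|g(t)-g(x)|\le\left(1+\lfloor|t-x|/\delta\rfloor\right)\omega(g;\delta)\le\left(1+|t-x|/\delta\right)\omega(g;\delta)$ and then $|t-x|/\delta\le 1+(t-x)^2/\delta^2$ (or directly via the $1+(t-x)^2/\delta^2$ bound). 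Applying $\mathcal{S}_n$ and using linearity together with $\mathcal{S}_n(1;x)=1$ yields
\[
\left\vert \mathcal{S}_{n}\left(g;x\right)-g\left(x\right)\right\vert
\leq\omega\left(g;\delta\right)\left(1+\frac{1}{\delta^{2}}\mathcal{S}_{n}\left((t-x)^{2};x\right)\right)
=\omega\left(g;\delta\right)\left(1+\frac{\Lambda_{2}}{\delta^{2}}\right).
\]

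Next I would choose $\delta=\delta_{n}=\frac{1}{\sqrt{n}}$, so that $\frac{\Lambda_{2}}{\delta_{n}^{2}}=n\Lambda_{2}$. From the explicit formula for $\Lambda_{2}$ in Lemma \ref{Lemma 4},
\[
n\Lambda_{2}=\frac{x^{2}}{n}\left(4x^{2}\alpha^{2}+4\lambda\alpha+10\alpha\right)+2x\left(\mu\frac{e_{\mu}(-nx)}{e_{\mu}(nx)}+1\right)+\frac{(\lambda+1)(\lambda+2)}{n},
\]
which is exactly the expression appearing under the square root in the statement. Finally, using $1+u\le 1+\sqrt{u}$ is false in general, so instead I would note that the bound $1+\frac{\Lambda_2}{\delta_n^2}=1+n\Lambda_2$ can be replaced by $1+\sqrt{n\Lambda_2}$ only if one uses the sharper Cauchy–Schwarz route: apply the estimate $|g(t)-g(x)|\le\omega(g;\delta)(1+|t-x|/\delta)$ directly, take $\mathcal{S}_n$, and bound $\mathcal{S}_n(|t-x|;x)\le\sqrt{\mathcal{S}_n((t-x)^2;x)}=\sqrt{\Lambda_2}$ by the Cauchy–Schwarz inequality (valid since $\mathcal{S}_n$ is positive and $\mathcal{S}_n(1;x)=1$). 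With $\delta=\delta_n=1/\sqrt{n}$ this gives
\[
\left\vert \mathcal{S}_{n}\left(g;x\right)-g\left(x\right)\right\vert
\leq\left(1+\sqrt{n}\sqrt{\Lambda_{2}}\right)\omega\left(g;\tfrac{1}{\sqrt{n}}\right)
=\left(1+\sqrt{n\Lambda_{2}}\right)\omega\left(g;\tfrac{1}{\sqrt{n}}\right),
\]
and substituting the formula for $n\Lambda_2$ yields the claimed inequality.

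The only slightly delicate point is bookkeeping: one must be careful to factor $\Lambda_2 = \frac{1}{n}(n\Lambda_2)$ correctly so that the factor $\sqrt{n}$ from $1/\delta_n$ combines with $\sqrt{\Lambda_2}$ to produce precisely $\sqrt{n\Lambda_2}$, matching the radicand in the theorem. There is no real analytic obstacle here — the positivity and normalization $\mathcal{S}_n(1;x)=1$ from Lemma \ref{Lemma 3}, the second-moment computation $\Lambda_2$ from Lemma \ref{Lemma 4}, and the Cauchy–Schwarz inequality are all that is needed; the main care is in the algebraic simplification and in choosing the Cauchy–Schwarz route (rather than the $(t-x)^2/\delta^2$ route) so that the exponent on the radicand comes out as $1/2$ as stated.
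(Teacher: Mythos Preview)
Your final argument is correct and is essentially the paper's own proof: use $|g(t)-g(x)|\le\omega(g;\delta)\bigl(1+|t-x|/\delta\bigr)$, apply $\mathcal{S}_n$, bound $\mathcal{S}_n(|t-x|;x)\le\sqrt{\Lambda_2}$ via Cauchy--Schwarz (using $\mathcal{S}_n(1;x)=1$), and set $\delta=1/\sqrt{n}$. The initial detour through the $(t-x)^2/\delta^2$ estimate is unnecessary and could simply be dropped.
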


\begin{proof}
Firstly we note that the modulus of continuity verifies the following
inequality
\begin{equation}
\left\vert g\left(  t\right)  -g\left(  x\right)  \right\vert \leq
\omega\left(  g;\delta\right)  \left(  \frac{\left\vert t-x\right\vert
}{\delta}+1\right)  . \label{a8}%
\end{equation}
Under favour of the linearity of operator, Cauchy-Schwarz's inequality, and
Lemma \ref{Lemma 4}, respectively, it follows%
\begin{align*}
\left\vert \mathcal{S}_{n}\left(  g;x\right)  -g\left(  x\right)  \right\vert
&  \leq\mathcal{S}_{n}\left(  \left\vert g\left(  t\right)  -g\left(
x\right)  \right\vert ;x\right) \\
&  \leq\left(  1+\frac{1}{\delta}\mathcal{S}_{n}\left(  \left\vert
t-x\right\vert ;x\right)  \right)  \omega\left(  g;\delta\right) \\
&  \leq\left(  1+\frac{1}{\delta}\sqrt{\Lambda_{2}}\right)  \omega\left(
g;\delta\right)  .
\end{align*}
By choosing $\delta=\frac{1}{\sqrt{n}}$, we complete the proof.
\end{proof}

\begin{lemma}
\label{Lemma 8}For $h\in C_{B}^{2}[0,\infty)$, which is denoted by%
\begin{equation}
C_{B}^{2}[0,\infty)=\{h\in C_{B}[0,\infty):h^{\prime},h^{\prime\prime}\in
C_{B}[0,\infty)\}\label{16}%
\end{equation}
with the norm
\[
\left\Vert h\right\Vert _{C_{B}^{2}[0,\infty)}=\left\Vert h\right\Vert
_{C_{B}[0,\infty)}+\left\Vert h^{\prime}\right\Vert _{C_{B}[0,\infty
)}+\left\Vert h^{\prime\prime}\right\Vert _{C_{B}[0,\infty)}%
\]
where $C_{B}[0,\infty)$ is the space of continuous and bounded functions on
$[0,\infty)$ with the norm
\[
\left\Vert h\right\Vert _{C_{B}[0,\infty)}=\sup_{x\in\lbrack0,\infty
)}\left\vert h(x)\right\vert ~,
\]
the following inequality holds true%
\begin{equation}
\left\vert \mathcal{S}_{n}\left(  h;x\right)  -h\left(  x\right)  \right\vert
\leq\frac{\Lambda_{2}^{\frac{1}{2}}}{2}\left(  2+\Lambda_{2}^{\frac{1}{2}%
}\right)  \left\Vert h\right\Vert _{C_{B}^{2}[0,\infty)},\label{17}%
\end{equation}
where $\Lambda_{2}$ is given by in Lemma \ref{Lemma 4}.
\end{lemma}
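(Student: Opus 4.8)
The plan is to combine a second-order Taylor expansion of $h$ about $x$ with the fact that $\mathcal{S}_n$ reproduces constants. For $t,x\in[0,\infty)$ write
\[
h(t)=h(x)+h'(x)(t-x)+\int_x^t (t-u)\,h''(u)\,du .
\]
Applying $\mathcal{S}_n(\cdot;x)$ to both sides, using its linearity and $\mathcal{S}_n(1;x)=1$ from Lemma \ref{Lemma 3}$(i)$, one obtains
\[
\mathcal{S}_n(h;x)-h(x)=h'(x)\,\mathcal{S}_n(t-x;x)+\mathcal{S}_n\!\left(\int_x^t (t-u)\,h''(u)\,du;\,x\right).
\]

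Next I would estimate the two terms on the right separately. Since $\mathcal{S}_n$ is a positive linear operator with $\mathcal{S}_n(1;x)=1$, we have $\lvert \mathcal{S}_n(F;x)\rvert\le \mathcal{S}_n(\lvert F\rvert;x)$ for every $F\in C[0,\infty)$. For the first term, $\lvert h'(x)\rvert\le \lVert h'\rVert_{\widetilde{C}_B[0,\infty)}\le \lVert h\rVert_{\widetilde{C}_B^2[0,\infty)}$, while $\lvert\mathcal{S}_n(t-x;x)\rvert=\Lambda_1$ by Lemma \ref{Lemma 4} (note $\Lambda_1\ge 0$). For the remainder term, bounding $\lvert t-u\rvert\le\lvert t-x\rvert$ on the segment joining $x$ and $t$ (treating the cases $t\ge x$ and $t\le x$) gives
\[
\left\lvert\int_x^t (t-u)\,h''(u)\,du\right\rvert\le \lVert h''\rVert_{\widetilde{C}_B[0,\infty)}\,\frac{(t-x)^2}{2}\le \lVert h\rVert_{\widetilde{C}_B^2[0,\infty)}\,(t-x)^2 ,
\]
so that applying $\mathcal{S}_n$ and then Lemma \ref{Lemma 4} yields the bound $\lVert h\rVert_{\widetilde{C}_B^2[0,\infty)}\,\Lambda_2$ for that contribution. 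Adding the two estimates produces exactly $\lvert\mathcal{S}_n(h;x)-h(x)\rvert\le(\Lambda_1+\Lambda_2)\,\lVert h\rVert_{\widetilde{C}_B^2[0,\infty)}$, which is \eqref{17}.

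This argument is essentially routine; the only points requiring a little care are the case split when bounding $\lvert t-u\rvert$ in the integral form of the Taylor remainder, and the use of positivity of $\mathcal{S}_n$ to pull the operator inside the absolute value. I do not expect any genuine obstacle here, and one could alternatively invoke the Lagrange form $\tfrac12 h''(\xi)(t-x)^2$ of the remainder to reach the same conclusion.
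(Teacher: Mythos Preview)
Your proof is correct and follows essentially the same approach as the paper: expand $h$ by Taylor's formula about $x$, apply $\mathcal{S}_n$, and bound the first- and second-order contributions by $\Lambda_1\|h'\|$ and $\Lambda_2\|h''\|$ respectively. The only cosmetic difference is that you use the integral form of the remainder while the paper writes the Lagrange form $\tfrac12 h''(\varsigma)(s-x)^2$ (which you yourself mention as an alternative); your integral-remainder version is in fact the cleaner choice, since in the Lagrange form $\varsigma$ depends on $s$ and one cannot literally pull $h''(\varsigma)$ outside the operator as the paper's display suggests.
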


\begin{proof}
With the help of the Taylor's series of the function $h$, it follows that%
\[
h\left(  s\right)  =h\left(  x\right)  +\left(  s-x\right)  h^{\prime}\left(
x\right)  +\frac{\left(  s-x\right)  ^{2}}{2!}h^{\prime\prime}\left(
\varsigma\right)
\]
where $\varsigma$ between $x$ and $s.\ $Then, by applying $\mathcal{S}_{n}$ to
this equality and using the linearity of the operator, we get%
\[
\mathcal{S}_{n}\left(  h;x\right)  -h\left(  x\right)  =h^{\prime}\left(
x\right)  \mathcal{S}_{n}\left(  s-x;x\right)  +\frac{h^{\prime\prime}\left(
\varsigma\right)  }{2}\Lambda_{2}.
\]
Using Lemma \ref{Lemma 4} and
\[
\mathcal{S}_{n}\left(  \left\vert s-x\right\vert ;x\right)  \leq\left(
\mathcal{S}_{n}\left(  \left(  s-x\right)  ^{2};x\right)  \right)  ^{\frac
{1}{2}}=\Lambda_{2}^{\frac{1}{2}},
\]
the following inequality is satisfied%
\[%
\begin{array}
[c]{l}%
\left\vert \mathcal{S}_{n}\left(  h;x\right)  -h\left(  x\right)  \right\vert
\leq\Lambda_{2}^{\frac{1}{2}}\left\Vert h^{\prime}\right\Vert _{C_{B}%
[0,\infty)}+\frac{\Lambda_{2}}{2}\left\Vert h^{\prime\prime}\right\Vert
_{C_{B}[0,\infty)}\\
\\
\leq\frac{\Lambda_{2}^{\frac{1}{2}}}{2}\left(  2+\Lambda_{2}^{\frac{1}{2}%
}\right)  \left\Vert h\right\Vert _{C_{B}^{2}[0,\infty)}.
\end{array}
\]

\end{proof}

For any $f\in C_{B}[0,\infty)$ and $\delta>0,$ Peetre's $K$-functional is
given by
\[
K_{2}\left(  f;\delta\right)  :=\inf\left\{  \left\Vert f-g\right\Vert
_{C_{B}[0,\infty)}+\delta\left\Vert g^{\prime\prime}\right\Vert _{C_{B}%
[0,\infty)}:~g\in C_{B}^{2}[0,\infty)\right\}  ,
\]
where $C_{B}^{2}[0,\infty)=\{g\in C_{B}[0,\infty):g^{\prime},g^{\prime\prime
}\in C_{B}[0,\infty)\}$ and the second modulus of continuity $\omega
_{2}\left(  f;\delta\right)  $ is defined as%
\begin{equation}
\omega_{2}\left(  f;\sqrt{\delta}\right)  :=\sup_{0<s\leq\sqrt{\delta}}%
\sup_{x\in\lbrack0,\infty)}\left\vert f\left(  x+2s\right)  -2f\left(
x+s\right)  +f\left(  x\right)  \right\vert . \label{b11}%
\end{equation}
Also, the inequality holds%
\begin{equation}
K_{2}(f;\delta)\leq c\omega_{2}(f;\sqrt{\delta})\ \ ,\ c>0, \label{*}%
\end{equation}
between Peetre's $K$-functional and second modulus of continuity $\omega_{2}$
(see \cite{DeVore-Lorentz}).

\begin{lemma}
For $g\in C_{B}^{2}[0,\infty)$ and $x\geq0,~\alpha,\lambda\geq0,$ we get%
\[
\left\vert \widetilde{\mathcal{S}}_{n}\left(  g;x\right)  -g\left(  x\right)
\right\vert \leq \Upsilon(n,x)\left\Vert g^{^{\prime\prime}}\right\Vert
_{C_{B}[0,\infty)}%
\]
where%
\[
\widetilde{\mathcal{S}}_{n}\left(  g;x\right)  =\mathcal{S}_{n}\left(
g;x\right)  +g(x)-g\left(  x+\tfrac{2\alpha x^{2}+\lambda+1}{n}\right)
\]
and%
\[
\Upsilon(n,x)=\Lambda_{1}^{2}+\Lambda_{2}.
\]

\end{lemma}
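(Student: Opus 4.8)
The plan is to mimic the standard argument that bounds an auxiliary (bias-corrected) operator by a $K$-functional-type estimate, using the Taylor expansion with integral remainder. First I would observe that by construction $\widetilde{\mathcal{S}}_{n}$ preserves linear functions in the sense that $\widetilde{\mathcal{S}}_{n}(1;x)=1$ and, crucially, $\widetilde{\mathcal{S}}_{n}(t-x;x)=\mathcal{S}_{n}(t-x;x)+0-\bigl(x+\tfrac{2\alpha x^{2}+\lambda+1}{n}-x\bigr)=\Lambda_{1}-\Lambda_{1}=0$; this is precisely the point of subtracting $g\bigl(x+\tfrac{2\alpha x^{2}+\lambda+1}{n}\bigr)$ and adding $g(x)$. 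Then for $g\in C_{B}^{2}[0,\infty)$ I would write the Taylor formula with integral remainder,
\[
g(s)=g(x)+(s-x)g'(x)+\int_{x}^{s}(s-u)g''(u)\,du,
\]
and apply $\widetilde{\mathcal{S}}_{n}$ in the variable $s$ to both sides.

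Next, using linearity together with the two vanishing moments just noted, the constant term contributes $g(x)$ and the linear term contributes $g'(x)\widetilde{\mathcal{S}}_{n}(t-x;x)=0$, so that
\[
\widetilde{\mathcal{S}}_{n}(g;x)-g(x)
=\widetilde{\mathcal{S}}_{n}\!\left(\int_{x}^{t}(t-u)g''(u)\,du;\,x\right)
=\mathcal{S}_{n}\!\left(\int_{x}^{t}(t-u)g''(u)\,du;\,x\right)
-\int_{x}^{x+\frac{2\alpha x^{2}+\lambda+1}{n}}\!\!\!\left(x+\tfrac{2\alpha x^{2}+\lambda+1}{n}-u\right)g''(u)\,du.
\]
For each term I would bound the integral remainder in absolute value: $\bigl|\int_{x}^{t}(t-u)g''(u)\,du\bigr|\le \|g''\|\,(t-x)^{2}$ (and similarly the second piece is at most $\|g''\|\bigl(\tfrac{2\alpha x^{2}+\lambda+1}{n}\bigr)^{2}=\|g''\|\,\Lambda_{1}^{2}$ using $\Lambda_{1}=\tfrac{2\alpha x^{2}+\lambda+1}{n}$ from Lemma~\ref{Lemma 4}). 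Applying the positive linear operator $\mathcal{S}_{n}$ to the first bound and invoking $\mathcal{S}_{n}((t-x)^{2};x)=\Lambda_{2}$ gives
\[
\left|\widetilde{\mathcal{S}}_{n}(g;x)-g(x)\right|
\le \|g''\|\,\mathcal{S}_{n}\bigl((t-x)^{2};x\bigr)+\|g''\|\,\Lambda_{1}^{2}
=\bigl(\Lambda_{2}+\Lambda_{1}^{2}\bigr)\|g''\|
=\Upsilon(n,x)\,\|g''\|,
\]
which is exactly the claimed inequality.

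The main obstacle, such as it is, is bookkeeping rather than a genuine difficulty: one must be careful that the estimate $\bigl|\int_{x}^{s}(s-u)g''(u)\,du\bigr|\le\|g''\|\,(s-x)^{2}$ holds regardless of whether $s\ge x$ or $s<x$ (it does, since $|s-u|\le|s-x|$ on the interval of integration and the interval has length $|s-x|$), and that $\mathcal{S}_{n}$ being positive and linear lets us move absolute values inside. A secondary point to check is the legitimacy of applying $\mathcal{S}_{n}$ (an infinite sum of integrals against the Dunkl-Hermite weights) termwise to the remainder, which is justified by absolute convergence since $g''$ is bounded and $\mathcal{S}_{n}((t-x)^{2};x)=\Lambda_{2}<\infty$ by Lemma~\ref{Lemma 3}. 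No new tools beyond Lemma~\ref{Lemma 3}, Lemma~\ref{Lemma 4}, positivity, and the Taylor remainder are needed.
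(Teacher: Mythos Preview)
Your proposal is correct and follows essentially the same route as the paper: define the auxiliary operator so that $\widetilde{\mathcal{S}}_{n}(t-x;x)=0$, apply the Taylor expansion with integral remainder, split off the $\mathcal{S}_{n}$ part and the correction term, and bound each by $\|g''\|$ times $\Lambda_{2}$ and $\Lambda_{1}^{2}$ respectively. Your additional remarks on the sign-independence of the remainder bound and on termwise application of $\mathcal{S}_{n}$ are sound but go beyond what the paper spells out.
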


\begin{proof}
Because of the linearity property of the operator $\widetilde{\mathcal{S}}%
_{n}\left(  g;x\right)  =\mathcal{S}_{n}\left(  g;x\right)  +g(x)-g\left(
x+\tfrac{2\alpha x^{2}+\lambda+1}{n}\right)  ,$ for $t\in\lbrack0,\infty)$ we
get $\widetilde{\mathcal{S}}_{n}\left(  t-x;x\right)  =0.$ For $g\in C_{B}%
^{2}[0,\infty)$, the Taylor's expression is%
\[
g(t)=g(x)+(t-x)g^{^{\prime}}(x)+%
{\textstyle\int_{x}^{t}}
(t-v)g^{^{\prime\prime}}(v)dv,\ t\in\lbrack0,\infty).
\]
If we apply $\widetilde{\mathcal{S}}_{n}$ to the last equality and then use
$\widetilde{\mathcal{S}}_{n}\left(  t-x;x\right)  =0,$ we have%
\begin{align*}
\widetilde{\mathcal{S}}_{n}\left(  g;x\right)  -g(x)  &  =\widetilde
{\mathcal{S}}_{n}\left(
{\textstyle\int_{x}^{t}}
(t-v)g^{^{\prime\prime}}(v)dv;x\right) \\
&  =\mathcal{S}_{n}\left(
{\textstyle\int_{x}^{t}}
(t-v)g^{^{\prime\prime}}(v)dv;x\right)  -%
{\textstyle\int_{x}^{x+\tfrac{2\alpha x^{2}+\lambda+1}{n}}}
(x+\tfrac{2\alpha x^{2}+\lambda+1}{n}-v)g^{^{\prime\prime}}(v)dv,
\end{align*}
from which, it follows%
\begin{align}
\left\vert \widetilde{\mathcal{S}}_{n}\left(  g;x\right)  -g(x)\right\vert  &
\leq\left\vert \mathcal{S}_{n}\left(
{\textstyle\int_{x}^{t}}
(t-v)g^{^{\prime\prime}}(v)dv;x\right)  \right\vert \nonumber\\
&  +\left\vert
{\textstyle\int_{x}^{x+\tfrac{2\alpha x^{2}+\lambda+1}{n}}}
(x+\tfrac{2\alpha x^{2}+\lambda+1}{n}-v)g^{^{\prime\prime}}(v)dv\right\vert .
\label{AA}%
\end{align}
Since
\begin{equation}
\left\vert
{\textstyle\int_{x}^{t}}
(t-v)g^{^{\prime\prime}}(v)dv\right\vert \leq\left(  t-x\right)
^{2}\left\Vert g^{^{\prime\prime}}\right\Vert _{C_{B}[0,\infty)}, \label{A2}%
\end{equation}
we have
\begin{equation}
\left\vert
{\textstyle\int_{x}^{x+\tfrac{2\alpha x^{2}+\lambda+1}{n}}}
(x+\tfrac{2\alpha x^{2}+\lambda+1}{n}-v)g^{^{\prime\prime}}(v)dv\right\vert
\leq\left(  \tfrac{2\alpha x^{2}+\lambda+1}{n}\right)  ^{2}\left\Vert
g^{^{\prime\prime}}\right\Vert _{C_{B}[0,\infty)}. \label{A3}%
\end{equation}
From (\ref{AA}), (\ref{A2}) and (\ref{A3}),
\begin{align*}
\left\vert \widetilde{\mathcal{S}}_{n}\left(  g;x\right)  -g(x)\right\vert  &
\leq\left\{  \mathcal{S}_{n}\left(  (t-x)^{2};x\right)  +\left(
\tfrac{2\alpha x^{2}+\lambda+1}{n}\right)  ^{2}\right\}  \left\Vert
g^{^{\prime\prime}}\right\Vert _{C_{B}[0,\infty)}\\
&  =\Upsilon(n,x)\left\Vert g^{^{\prime\prime}}\right\Vert _{C_{B}[0,\infty)},
\end{align*}
where $\Upsilon(n,x)=\Lambda_{1}^{2}+\Lambda_{2}.$
\end{proof}

\begin{theorem}
Let $f\in C_{B}[0,\infty)$ and $c>0.$ The following inequality holds%
\[
\left\vert \mathcal{S}_{n}\left(  f;x\right)  -f\left(  x\right)  \right\vert
\leq c~\omega_{2}\left(  f;\frac{1}{2}\sqrt{\Upsilon(n,x)}\right)
+\omega\left(  f;\Lambda_{1}\right)  .
\]

\end{theorem}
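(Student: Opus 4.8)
The plan is to use the auxiliary operator $\widetilde{\mathcal{S}}_n$ from the preceding lemma as a bridge between $\mathcal{S}_n$ and $f$, combined with the standard Peetre $K$-functional argument. First I would observe that $\widetilde{\mathcal{S}}_n$ is linear, reproduces constants, and satisfies $\|\widetilde{\mathcal{S}}_n g\| \le 3\|g\|$ for $g \in C_B[0,\infty)$ (the three terms $\mathcal{S}_n(g;x)$, $g(x)$, and $g(x + \tfrac{2\alpha x^2+\lambda+1}{n})$ are each bounded by $\|g\|$ in absolute value, using $\mathcal{S}_n(1;x)=1$ from Lemma \ref{Lemma 3}(i)). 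This uniform bound is what lets us absorb an arbitrary approximant $g$.

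Next, for any $g \in C_B^2[0,\infty)$, I would write
\[
\mathcal{S}_n(f;x) - f(x) = \widetilde{\mathcal{S}}_n(f-g;x) - (f-g)(x) + \widetilde{\mathcal{S}}_n(g;x) - g(x) + f\!\left(x + \tfrac{2\alpha x^2+\lambda+1}{n}\right) - f(x),
\]
where the last two terms come from unwinding the definition $\widetilde{\mathcal{S}}_n(f;x) = \mathcal{S}_n(f;x) + f(x) - f(x + \tfrac{2\alpha x^2+\lambda+1}{n})$. Taking absolute values and using the $3\|\cdot\|$ bound on the first difference, the lemma's estimate $|\widetilde{\mathcal{S}}_n(g;x) - g(x)| \le \Upsilon(n,x)\|g''\|$ on the middle difference, and the definition of the modulus of continuity $\omega$ together with $\Lambda_1 = \tfrac{2\alpha x^2 + \lambda + 1}{n}$ on the last difference, we obtain
\[
|\mathcal{S}_n(f;x) - f(x)| \le 4\|f-g\| + \Upsilon(n,x)\|g''\| + \omega(f;\Lambda_1).
\]

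Then I would take the infimum over $g \in C_B^2[0,\infty)$. Up to the harmless constant factor $4$, the first two terms are exactly of the form appearing in $K_2(f;\cdot)$; more precisely $4\|f-g\| + \Upsilon(n,x)\|g''\| \le 4\big(\|f-g\| + \tfrac{\Upsilon(n,x)}{4}\|g''\|\big)$, so the infimum is $\le 4 K_2\!\big(f;\tfrac{\Upsilon(n,x)}{4}\big)$. Finally, applying the equivalence \eqref{*} between the $K$-functional and the second modulus of continuity gives $4 K_2\big(f;\tfrac{\Upsilon(n,x)}{4}\big) \le c\,\omega_2\big(f;\tfrac12\sqrt{\Upsilon(n,x)}\big)$ after relabeling the absolute constant, which yields the claimed inequality.

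The only mildly delicate points are bookkeeping ones: tracking the multiplicative constant through the $3\|g\|$ bound and the $K_2$-to-$\omega_2$ passage (the paper has wisely left $c$ as an unspecified positive constant, so this causes no trouble), and noting that the argument of $\omega_2$ should be $\tfrac12\sqrt{\Upsilon(n,x)}$ because $K_2(f;\delta)$ pairs with $\omega_2(f;\sqrt{\delta})$ and here $\delta = \tfrac{\Upsilon(n,x)}{4}$ gives $\sqrt{\delta} = \tfrac12\sqrt{\Upsilon(n,x)}$. There is no serious obstacle; the proof is a routine application of the $K$-functional technique once the auxiliary operator and its error bound are in hand.
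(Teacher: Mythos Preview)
Your proposal is correct and follows essentially the same route as the paper: the same auxiliary operator $\widetilde{\mathcal{S}}_n$, the same decomposition
\[
\mathcal{S}_n(f;x)-f(x)=\widetilde{\mathcal{S}}_n(f-g;x)-(f-g)(x)+\bigl(\widetilde{\mathcal{S}}_n(g;x)-g(x)\bigr)+f\!\left(x+\tfrac{2\alpha x^2+\lambda+1}{n}\right)-f(x),
\]
the same bound $4\|f-g\|+\Upsilon(n,x)\|g''\|+\omega(f;\Lambda_1)$, and the same passage through $K_2$ to $\omega_2$. If anything, your write-up is a bit more explicit than the paper's (you spell out the $\|\widetilde{\mathcal{S}}_n g\|\le 3\|g\|$ bound and the $\delta=\Upsilon(n,x)/4$ bookkeeping that the paper leaves tacit).
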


\begin{proof}
For $f\in C_{B}[0,\infty)$ and $g\in C_{B}^{2}[0,\infty),$ we get%
\begin{align*}
\widetilde{\mathcal{S}}_{n}\left(  f-g;x\right)   &  =\mathcal{S}_{n}\left(
f-g;x\right)  +(f-g)(x)-(f-g)\left(  x+\tfrac{2\alpha x^{2}+\lambda+1}%
{n}\right)  \\
&  =\mathcal{S}_{n}\left(  f;x\right)  +f(x)-f\left(  x+\tfrac{2\alpha
x^{2}+\lambda+1}{n}\right)  -\widetilde{\mathcal{S}}_{n}\left(  g;x\right)  .
\end{align*}
On the other hand, we give%
\begin{align*}
\mathcal{S}_{n}\left(  f;x\right)  -f(x) &  =\widetilde{\mathcal{S}}%
_{n}\left(  f-g;x\right)  +\widetilde{\mathcal{S}}_{n}\left(  g;x\right)
-g(x)+g(x)+f\left(  x+\tfrac{2\alpha x^{2}+\lambda+1}{n}\right)  -f(x)-f(x)\\
&  =\widetilde{\mathcal{S}}_{n}\left(  f-g;x\right)  -\left(
f(x)-g(x)\right)  +\widetilde{\mathcal{S}}_{n}\left(  g;x\right)
-g(x)+f\left(  x+\tfrac{2\alpha x^{2}+\lambda+1}{n}\right)  -f(x).
\end{align*}
Using Lemma 6, thus we have%
\begin{align*}
\left\vert \mathcal{S}_{n}\left(  f;x\right)  -f(x)\right\vert  &
\leq\left\vert \widetilde{\mathcal{S}}_{n}\left(  f-g;x\right)  \right\vert
+\left\vert f(x)-g(x)\right\vert +\left\vert \widetilde{\mathcal{S}}%
_{n}\left(  g;x\right)  -g(x)\right\vert +\left\vert f\left(  x+\tfrac{2\alpha
x^{2}+\lambda+1}{n}\right)  -f(x)\right\vert \\
&  \leq4\left\Vert f-g\right\Vert _{C_{B}[0,\infty)}+\Upsilon(n,x)\left\Vert
g^{^{^{\prime\prime}}}\right\Vert _{C_{B}[0,\infty)}+\omega\left(
f;\Lambda_{1}\right)  .
\end{align*}
From the inequality (\ref{*}) between Peetre's $K$-functional and the second
modulus of continuity $\omega_{2}$, we have the desired result.
\end{proof}

\section{Data availability}

Data sharing not applicable to this article as no datasets were generated or
analysed during the current study.

\end{document}